\numberwithin{equation}{section}
\theoremstyle{plain} %text of this environment is typesetted in italics
\newtheorem{theorem}{\indent\sc Theorem}[section]
\newtheorem{lemma}[theorem]{\indent\sc Lemma}
\theoremstyle{definition} %text of this environment is typesetted in roman letters
\newtheorem{definition}[theorem]{\indent\sc Definition}
\newtheorem{remark}[theorem]{\indent\sc Remark}
\begin{document}
\setcounter{page}{1}
\title[Weighted Norm Inequalities for One-sided Oscillatory Integral Operators]
{Weighted Norm Inequalities for One-sided Oscillatory Integral Operators$^*$}
%title of paper and the running head option
\author[Zunwei Fu ]{Zunwei Fu}%first author's name and the running head option
\author[Shaoguang Shi]{Shaoguang Shi} %forth author's name and the running head option
\author[Shanzhen Lu]{Shanzhen Lu} %second author's name and the running head option
%%%%%%%%%%%%%%% footnote %%%%%%%%%%%%%%%%
\thanks{2010 {\it Mathematics Subject Classification.\/}
Primary 42B20; Secondary 42B25.
\endgraf
{\it Key Words and Phrases.} One-sided weight, one-sided oscillatory integral, Calder\'{o}n-Zygmund kernel.}

%In case \subjclass[2000] command is not effective
%(or the version of amsart.cls is old), write as follows instead:
%\renewcommand{\thefootnote}{\fnsymbol{footnote}}
%\footnote[0]{2000\textit{ Mathematics Subject Classification}.
%Primary 00; Secondary 00.}
%
\thanks{ %acknowledgment of support etc. if any
$^{*}$This work was partially supported by
 NSF of China (Grant Nos. 10871024, 10901076 and 10931001), NSF of Shandong Province
 (Grant No. Q2008A01) and the Key Laboratory of Mathematics and Complex System (Beijing Normal
University), Ministry of Education, China.}

%%%%%%%%%%%% Authors addresses %%%%%%%%%%%%%
\address{% First Author
School of Sciences\endgraf
Linyi
University \endgraf
Linyi 276005\endgraf
P. R. China}
\email{lyfzw@tom.com}

\address{% Second Author
School of Mathematical Sciences\endgraf
Beijing Normal
University\endgraf
Beijing 100875\endgraf
and\endgraf
School of Sciences\endgraf
Linyi
University \endgraf
Linyi 276005\endgraf
P. R. China}
\email{shishaoguang@yahoo.com.cn}

\address{% Third Author
School of Mathematical Sciences\endgraf
Beijing Normal
University\endgraf
Beijing 100875\endgraf
P. R. China}
\email{lusz@bnu.edu.cn}

%%%%%%%%%%%%%%%%%%%%%%%%%%%%%%%%%%%%%%%%%

\maketitle
\begin{abstract}
The purpose of this paper is to establish the weighted norm inequalities of one-sided oscillatory
integral operators by the aid of interpolation of operators with change of measures.
\end{abstract}

\section*{Introduction} %delete * to number this section
Many operators in harmonic analysis or partial differential equation
are related to some versions of oscillatory integrals, such as the
Radon transform which has important applications in the CT technology.
Among numerous papers dealing with norm inequalities of integral operators in some function spaces, we refer to[2], [3], [9], [14] and [15].
More general, let us now consider a class of oscillatory integrals
defined by Ricci and Stein [10]:
$$Tf(x)=\mathrm{p.v.}\int_{\mathbb{R}}e^{iP(x,y)}K(x-y)f(y)dy,$$
where $P(x,y)$ is a real valued polynomial defined on $\mathbb{R}\times\mathbb{R}$, and $K$ is a standard Calder\'{o}n-Zygmund Kernel.
That means $K$ satisfies
$$|K(x)|\leq\frac{C}{|x|},\, x\neq 0,\eqno(1.1)$$
and
$$|K(x-y)-K(x)|\leq\frac{C|y|}{|x|^{2}},\, x\neq y.\eqno(1.2)$$

We recover the Ricci and Stein's celebrated result [10] on oscillatory integrals as follows.

\begin{theorem} Suppose $K(x,y)$ satisfies $\mathrm{(1.1)}$ and $\mathrm{(1.2)}$. If the Calder\'{o}n-Zygmund singular integral operator
$$\widetilde{T}f(x)=\mathrm{p.v.}\int_{\mathbb{R}}K(x-y)f(y)dy$$
is of type $(L^{2}, L^{2})$, then for any real polynomial $P(x,y)$, the oscillatory integral operator
$T$is of type $(L^{p},L^{p})$, $1<p<\infty$, where its norm depends only on the total degree of $P$, but not on the coefficients of $P$.
\end{theorem}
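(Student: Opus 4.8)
The plan is to follow the classical scheme: normalize the phase by modulations, establish the $L^2$ estimate uniformly in the coefficients by almost-orthogonality, and then climb to $L^p$ by induction on the degree of $P$. First I would normalize $P$. Writing $P(x,y)=\sum_{j,k}a_{jk}x^jy^k$, any term depending on $x$ alone contributes a factor $e^{iQ(x)}$ of modulus one that factors out of the integral, and any term depending on $y$ alone contributes $e^{iR(y)}$ that can be absorbed into $f$ via the isometry $f\mapsto e^{iR}f$ of $L^p$. Neither changes $\|T\|_{L^p\to L^p}$, so I may assume $P(x,y)=\sum_{j\ge1,\,k\ge1}a_{jk}x^jy^k$ consists only of mixed terms; in particular $\deg P\ge2$ unless $P$ is constant. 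I would also record that the adjoint $T^{*}$ has kernel $\overline{K(y-x)}\,e^{-iP(y,x)}$, again a Calder\'{o}n--Zygmund kernel with a real polynomial phase of the same degree, so by duality it suffices to obtain the bound on a single range, say $2\le p<\infty$.

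Second, the $L^2$ bound. Since the hypothesis furnishes only $L^2$ boundedness of the non-oscillatory operator $\widetilde T$, I must show that the oscillation does not destroy $L^2$ boundedness, with a constant independent of the $a_{jk}$. I would decompose $T=\sum_{\ell\in\mathbb Z}T_\ell$ dyadically in the kernel variable, $T_\ell$ using $K(x-y)$ restricted to $|x-y|\sim2^\ell$, and likewise $\widetilde T=\sum_\ell\widetilde T_\ell$. On scales where the phase varies by $O(1)$ across the annulus the oscillatory factor is harmless and $T_\ell$ is controlled by the Calder\'{o}n--Zygmund bounds for $\widetilde T_\ell$; on scales where the phase oscillates rapidly, van der Corput's lemma applied to the compositions $T_\ell T_m^{*}$ and $T_\ell^{*}T_m$ yields decay in $|\ell-m|$ together with decay in the oscillation. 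Summing these almost-orthogonality estimates through the Cotlar--Stein lemma gives $\|T\|_{L^2\to L^2}\le C$ with $C$ depending only on $\deg P$.

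Third, the passage to $L^p$ by induction on $d=\deg P$. The base case $d\le1$ reduces, after the normalization above, to $T$ being a modulation of $\widetilde T$, which is $L^p$-bounded for $1<p<\infty$ by the classical Calder\'{o}n--Zygmund theorem (from $(1.1)$, $(1.2)$ and the $L^2$ hypothesis). For $d\ge2$ I would partition $\mathbb R$ into intervals $I_\nu$ of a length $\rho$ matched to the oscillation scale; on $I_\nu$, writing $x=c_\nu+u$ with $c_\nu$ its center, the factor $e^{iP(c_\nu,y)}$ is a pure-$y$ modulation that I absorb into $f$, while the residual phase $P(c_\nu+u,y)-P(c_\nu,y)$, after rescaling $I_\nu$ to unit length, is controlled either by the inductive hypothesis (where the top mixed coefficient has effectively shrunk below the scale) or by the van der Corput gain of the previous step (where it is large). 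Reassembling the localized $L^p$ bounds and controlling the overlap of the pieces by the $L^2$ estimate together with the uniform bounds $\int_{|x-y|\sim2^\ell}|K|\lesssim1$, which yield $\|T_\ell\|_{L^1\to L^1},\ \|T_\ell\|_{L^\infty\to L^\infty}\le C$, produces the $L^p$ bound with constant depending only on $d$ and $p$.

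The main obstacle is uniformity: every constant must be independent of the coefficients $a_{jk}$, which is exactly what makes a naive Calder\'{o}n--Zygmund approach fail for $T$ itself, since $e^{iP(x,y)}K(x-y)$ is not a convolution kernel and its regularity in $x$ degrades as the coefficients grow. The two places where uniformity is won are the van der Corput almost-orthogonality in the $L^2$ step and the scale-matched localization in the induction; I expect the bookkeeping that keeps the inductive constant dependent only on the degree, and the choice of partition length balancing the oscillatory gain against the number of overlapping pieces, to be the technical heart of the argument.
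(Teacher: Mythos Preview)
Your outline is the classical Ricci--Stein argument and is correct in its essentials. The paper, however, does not prove this statement: Theorem~0.1 is simply quoted from [10] as background, and the paper's own work is the one-sided weighted analogue (Theorem~1.1). That proof, in Section~3, also proceeds by induction on the degree of $P$ but is organized somewhat differently from your sketch: the induction runs on the bidegree $(k,l)$ (degree in $x$ and in $y$ separately) rather than on total degree; a dilation is used to normalize $|a_{kl}|=1$; and the operator is split into a local piece over $0<y-x<1$ --- handled by recentering the phase at a floating point $h$, writing $P(x,y)=a_{kl}(x-h)^k(y-h)^l+R(x,y,h)$ with $R$ covered by the inductive hypothesis, and controlling the discrepancy by a mean-value estimate and the one-sided maximal function --- plus dyadic tails $T_j^+$ over $2^{j-1}<y-x<2^j$. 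For the tails the paper does not redo any oscillatory analysis: it invokes the unweighted decay $\|T_j^+\|_{L^p\to L^p}\le C\,2^{-j\delta}$ as a black box ``by means of the methods in [5] and [10]'' and then interpolates (Lemma~2.8) against the weighted maximal bound $\|T_j^+\|_{L^p(w^{1+\varepsilon})\to L^p(w^{1+\varepsilon})}\le C$. In other words, the van der Corput/Cotlar--Stein step you propose is exactly the content the paper cites rather than reproves; your proposal would furnish the input the paper takes for granted.
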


The study of one-sided operators was motivated not only
as the generalization of the theory of both-sided ones but also their
natural appearance in harmonic analysis, such as the one-sided Hardy-Littlewood maximal operator
$$M^{+}f(x)=\sup_{h>0}\frac{1}{h}\int_{x}^{x+h}|f(y)|dy$$
arising in the ergodic  maximal function. The one-sided weight $A_{p}^{+}$ classes were introduced by Sawyer [12], i.e, there exists a constant $C$ such that for all real $a$ and positive $h$:
$$\left(\frac{1}{h}\int_{a-h}^{a}w(x)dx\right)\left(\frac{1}{h}\int_{a}^{a+h}w(x)^{1-p'}dx\right)^{p-1}\leq C,$$
where $1<p<+\infty, 1/p+1/p'=1$. The smallest constant for which this is satisfied will be called the $A_{p}^{+}$ constant of $w$ and will be denoted by $A_{p}^{+}(w)$.

The counterpart of $M^{+}$ is defined as
$$M^{-}f(x)=\sup_{h>0}\frac{1}{h}\int_{x-h}^{x}|f(y)|dy.$$
The weight $w\in A_{p}^{-}$ means
$$\left(\frac{1}{h}\int_{a}^{a+h}w(x)dx\right)\left(\frac{1}{h}\int_{a-h}^{a}w(x)^{1-p'}dx\right)^{p-1}\leq C$$
for all real $a$ and positive $h$. The smallest constant for which this is satisfied will be called the $A_{p}^{-}$ constant of $w$ and will be denoted by $A_{p}^{-}(w)$.
\begin{remark}
The general definition of $A_{p}^{+}(A_{p}^{-})$ was introduced in
[7] as follows:
$$
A_{p}^{+}:\sup_{a<b<c}\frac{1}{(c-a)^{p}}\int_{a}^{b}w(x)dx\left(\int_{b}^{c}w(x)^{1-p'}dx\right)^{p-1}\leq C,
$$
and $$
A_{p}^{-}:\sup_{a<b<c}\frac{1}{(c-a)^{p}}\int_{b}^{c}w(x)dx\left(\int_{a}^{b}w(x)^{1-p'}dx\right)^{p-1}\leq C.
$$
\end{remark}

It is easy to see that $A_{p}\subset A_{p}^{+}$, $A_{p}\subset A_{p}^{-}$ and $A_{p}=A_{p}^{+}\bigcap A_{p}^{-}$,  where $A_{p}$ denotes the Muckenhoupt classes:
$$\left(\frac{1}{|I|}\int_{I}w(x)dx\right)\left(\frac{1}{|I|}\int_{I}w(x)^{1-p'}dx\right)^{p-1}\leq C.$$
Here $I$ denotes any intervals in $\mathbb{R}$. $A_{p}$ class on $\mathbb{R}^{n}$ can be naturally defined.

\begin{theorem}$[12]$\,\, Let $1<p<\infty$. Then\\
$\mathrm{(1)}$\,\, $M^{+}$ is bounded in $L^{p}(w)$ if and only if $w\in A_{p}^{+}$.\\
$\mathrm{(2)}$\,\, $M^{-}$ is bounded in $L^{p}(w)$ if and only if $w\in A_{p}^{-}$.
\end{theorem}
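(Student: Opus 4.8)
The plan is as follows. First observe that parts (1) and (2) are equivalent under the reflection $x\mapsto -x$: writing $\tilde f(x)=f(-x)$ and $\tilde w(x)=w(-x)$, one checks that $M^-f(x)=(M^+\tilde f)(-x)$ and that $w\in A_p^-$ if and only if $\tilde w\in A_p^+$, so the weighted $L^p$ bound for $M^-$ with weight $w$ is literally the bound for $M^+$ with weight $\tilde w$. Hence it suffices to prove (1), and within (1) there are two implications: necessity (boundedness $\Rightarrow w\in A_p^+$) and sufficiency ($w\in A_p^+\Rightarrow$ boundedness).

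\textbf{Necessity} is obtained by testing. Fix $a\in\mathbb{R}$ and $h>0$, set $\sigma=w^{1-p'}$, and take $f=\sigma\,\chi_{(a,a+h)}$. For every $x\in(a-h,a)$ the admissible interval $(x,a+h)$ has length at most $2h$, so $M^+f(x)\ge \frac{1}{2h}\int_a^{a+h}\sigma$. Restricting the assumed inequality $\int(M^+f)^p w\le C\int f^p w$ to $(a-h,a)$ on the left and using $\sigma^p w=\sigma$ on the right gives
$$\Big(\tfrac{1}{2h}\int_a^{a+h}\sigma\Big)^p\int_{a-h}^a w\;\le\;C\int_a^{a+h}\sigma,$$
and dividing by $\int_a^{a+h}\sigma$ yields exactly the $A_p^+$ inequality with constant $2^pC$; the degenerate cases in which the integrals vanish or diverge are handled by a routine approximation.

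\textbf{Sufficiency} I would reduce to a weak-type estimate by interpolation. Since $\|M^+f\|_\infty\le\|f\|_\infty$, the operator $M^+$ is trivially of weak type $(\infty,\infty)$ with respect to any measure, in particular $d\mu=w\,dx$. Thus, by the Marcinkiewicz interpolation theorem applied on the measure space $(\mathbb{R},\mu)$ to the sublinear operator $M^+$, it is enough to produce a single exponent $p_0$ with $1<p_0<p$ for which $M^+$ is of weak type $(p_0,p_0)$ with respect to $\mu$; interpolating the weak $(p_0,p_0)$ and weak $(\infty,\infty)$ bounds then yields the strong $(p,p)$ bound for $M^+$ on $L^p(w)$. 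This splits the task into two classical-looking ingredients: (A) a self-improvement (``openness'') property $w\in A_p^+\Rightarrow w\in A_{p_0}^+$ for some $p_0<p$, and (B) the implication $w\in A_q^+\Rightarrow M^+$ of weak type $(q,q)$ on $L^q(w)$.

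For (B), fix $\lambda>0$ and a nonnegative $f$; each point $x$ of the open set $\{M^+f>\lambda\}$ carries an interval $I_x=(x,x+h_x)$ with $\frac{1}{|I_x|}\int_{I_x}f>\lambda$. Hölder's inequality with exponents $q,q'$ on $I_x$ gives $\lambda|I_x|\le(\int_{I_x}f^q w)^{1/q}(\int_{I_x}\sigma)^{1/q'}$, and the geometry of $M^+$ places the mass of $f$ to the \emph{right} of $x$, which is precisely the configuration controlled by the $A_q^+$ inequality. The mechanism here is a one-sided covering lemma (of rising-sun / Vitali type) that selects from the $I_x$ a subfamily of bounded overlap, positioned so that $A_q^+$ converts the sum of the $\int_{I_x}\sigma$ into a bound by $\lambda^{-q}\int f^q w$ while still dominating $\mu(\{M^+f>\lambda\})$. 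For (A), I would invoke the one-sided reverse H\"older inequality for $A_p^+$ weights, which furnishes $s>1$ and a reverse H\"older estimate, necessarily \emph{asymmetric} (comparing $(\int_I w^s)^{1/s}$ with an average of $w$ over an interval enlarged to the left of $I$), and then translate it through the duality $w\in A_p^+\Leftrightarrow\sigma\in A_{p'}^-$ into membership $w\in A_{p_0}^+$ for some $p_0<p$. The main obstacle is exactly this one-sided structure in (A) and (B): because $M^+$ and the $A_p^+$ condition are directional, one cannot symmetrically dilate intervals as in the classical Calder\'on--Zygmund arguments, so both the covering lemma and the reverse H\"older inequality must be arranged with the correct leftward enlargement to match the position of the $w$-mass and the $\sigma$-mass to the one-sided testing geometry; establishing that asymmetric reverse H\"older inequality (equivalently, developing the $A_\infty^+$ theory) is the least routine step.
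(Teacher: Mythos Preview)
The paper does not supply a proof of this theorem: it is quoted verbatim from Sawyer's paper [12] as background and is never argued in the text. So there is no ``paper's own proof'' to compare your attempt against; the authors simply cite the result and move on.

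That said, your outline is the standard route and is essentially correct. The reflection reduction from (2) to (1) and the necessity argument by testing on $f=\sigma\chi_{(a,a+h)}$ are complete and accurate. For sufficiency, your strategy of combining the openness property $A_p^+\Rightarrow A_{p_0}^+$ for some $p_0<p$ with a weak-type $(p_0,p_0)$ bound and Marcinkiewicz interpolation is exactly Sawyer's scheme. The sketch of step~(B) is the only place that is still rough: the sentence in which ``$A_q^+$ converts the sum of the $\int_{I_x}\sigma$ into a bound by $\lambda^{-q}\int f^q w$'' hides the real work, namely that the selected intervals $I_x=(x,x+h_x)$ carry the $f$-mass on their \emph{right}, whereas the set $\{M^+f>\lambda\}$ whose $w$-measure you must control sits to their \emph{left}; the one-sided covering (sunrise-type) argument is precisely what pairs each interval with a leftward companion so that the $A_q^+$ condition applies in the correct orientation. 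If you write that covering step out (or alternatively follow Mart\'{\i}n-Reyes's more direct proof in [6], which the present paper also cites), the argument closes. Step~(A) is indeed the deepest ingredient, and your description of it via a one-sided reverse H\"older inequality and the duality $w\in A_p^+\Leftrightarrow w^{1-p'}\in A_{p'}^-$ matches both Sawyer's original approach and the treatment in [11] that the present paper leans on elsewhere.
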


We say that $w$ satisfies the $A_{1}^{+}(A_{1}^{-})$ condition if $M^{-}w(M^{+}w)\leq Cw$. The smallest such constant $C$ will be called the $A_{1}^{+}(A_{1}^{-})$ constant of $w$ and will be denoted by $A_{1}^{+}(w)(A_{1}^{-}(w))$. By Lebesgue's differentiation Theorem, we can easily prove $A_{1}^{+}(w) (A_{1}^{-}(w))\geq1$. In [8] the class $A_{\infty}^{+}$ was introduced as $A_{\infty}^{+}=\bigcup_{p<\infty}A_{p}^{+}$. These classes are of interest, not only because they control the boundedness of the one-sided Hardy-Littlewood maximal operator, but they are the right classes for the weighted estimates for one-sided Calder\'{o}n-Zygmund singular integrals which are defined by

$$\widetilde{T}^{+}f(x)=\lim_{\varepsilon\rightarrow0^{+}}\int_{x+\varepsilon}^{\infty}K(x-y)f(y)dy$$
and
$$
\widetilde{T}^{-}f(x)=\lim_{\varepsilon\rightarrow0^{+}}\int_{-\infty}^{x-\varepsilon}K(x-y)f(y)dy,
$$
where $K$ is a standard Calder\'{o}n-Zygmund kernel with support in $\mathbb{R}^{-}=(-\infty, 0)$ and $\mathbb{R}^{+}=(0, +\infty)$, respectively.

\begin{theorem} $[1]$\,\, Let $1<p<\infty$. Suppose $K$ satisfies $\mathrm{(1.1)}$, $\mathrm{(1.2)}$ and satisfies $$\left|\int_{\varepsilon<|x|<N}K(x)dx\right|\leq C$$
for all $\varepsilon$ and all $N$, with $0<\varepsilon<N$, and furthermore $\lim_{\varepsilon\rightarrow0^{+}}\int_{\varepsilon<|x|<N}K(x)dx$ exists. Then\\
$\mathrm{(1)}$\,\, $\widetilde{T}^{+}$ is bounded in $L^{p}(w)$ if and only if $w\in A_{p}^{+}$.\\
$\mathrm{(2)}$\,\, $\widetilde{T}^{-}$ is bounded in $L^{p}(w)$ if and only if $w\in A_{p}^{-}$.
\end{theorem}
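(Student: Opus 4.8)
The plan is to reduce to part (1) by a reflection symmetry, prove the two implications separately, and base the sufficiency on a one-sided Coifman--Fefferman inequality together with Theorem 1.2.

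First I would record the symmetry. Writing $Rf(x)=f(-x)$, a change of variables shows that $R\widetilde{T}^+R$ is an operator of the same form as $\widetilde{T}^-$, with kernel $\widetilde{K}(x)=K(-x)$ (again a standard Calder\'{o}n--Zygmund kernel, now supported in $\mathbb{R}^+$, and satisfying the same cancellation and principal-value hypotheses), while $R$ is an isometry from $L^p(w(-\cdot))$ onto $L^p(w)$ and $w\in A_p^+$ if and only if $w(-\cdot)\in A_p^-$. Thus (2) is equivalent to (1), and I work only with $\widetilde{T}^+$. I would also note at the outset that, because $K$ satisfies $(1.1)$, $(1.2)$, the uniform cancellation bound and the existence of the principal value, $\widetilde{T}^+$ is a convolution Calder\'{o}n--Zygmund operator; in particular it is bounded on $L^2(dx)$ and of weak type $(1,1)$ for Lebesgue measure. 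I treat this unweighted input as known.

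For the sufficiency ($w\in A_p^+\Rightarrow\widetilde{T}^+$ bounded on $L^p(w)$) the heart is the one-sided Coifman--Fefferman inequality
$$\int_{\mathbb{R}}|\widetilde{T}^+ f|^p\,w\,dx\le C\int_{\mathbb{R}}(M^+ f)^p\,w\,dx,$$
valid for $w\in A_\infty^+$. Granting it, and using $A_p^+\subset A_\infty^+$ together with Theorem 1.2(1) (which gives $\|M^+ f\|_{L^p(w)}\le C\|f\|_{L^p(w)}$), the boundedness of $\widetilde{T}^+$ on $L^p(w)$ follows at once. To prove the inequality I would introduce the one-sided sharp maximal operator $M^{\#}$ adapted to the right (on an interval $(x,x+h)$ one compares a function with its mean over the adjacent interval $(x+h,x+2h)$) and combine two ingredients: a Fefferman--Stein inequality $\|g\|_{L^p(w)}\le C\|M^{\#}g\|_{L^p(w)}$ valid for $w\in A_\infty^+$, and the pointwise estimate $M^{\#}(\widetilde{T}^+ f)\le C\,M^+ f$. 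The latter is obtained from the splitting $f=f\chi_{(x,x+2h)}+f\chi_{(x+2h,\infty)}$ --- whose orientation is dictated by $\supp K\subset\mathbb{R}^-$ --- estimating the local part by the $L^2$ bound and Kolmogorov's inequality (working with a small exponent in the sharp operator so that the weak-$(1,1)$ bound suffices) and the tail by the regularity estimate $(1.2)$, each contribution being $\lesssim M^+ f(x)$. Since the Fefferman--Stein step presupposes $\widetilde{T}^+ f\in L^p(w)$ a priori, I would first run the argument for truncated kernels and bounded, compactly supported $f$, and then pass to the limit.

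For the necessity ($\widetilde{T}^+$ bounded $\Rightarrow w\in A_p^+$) I would test the weighted inequality on functions supported to the right. Fix $a\in\mathbb{R}$ and $h>0$ and take $f\ge 0$ supported in $(a,a+h)$; for $x\in(a-h,a)$ one has $\widetilde{T}^+ f(x)=\int_a^{a+h}K(x-y)f(y)\,dy$ with $0<y-x<2h$, so the nondegeneracy of $K$ near the origin (coming from the existence and size of the principal value) yields a lower bound $|\widetilde{T}^+ f(x)|\ge \frac{c}{h}\int_a^{a+h}f$. Feeding this into $\|\widetilde{T}^+ f\|_{L^p(w)}\le C\|f\|_{L^p(w)}$ and choosing the extremal $f=w^{1-p'}\chi_{(a,a+h)}$ gives, after dividing by $\int_a^{a+h}w^{1-p'}$, precisely the inequality $\left(\frac1h\int_{a-h}^{a}w\right)\left(\frac1h\int_a^{a+h}w^{1-p'}\right)^{p-1}\le C$, that is $w\in A_p^+$. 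The main obstacle is the sufficiency, and within it the correct handedness of the auxiliary operators: since $\widetilde{T}^+$ integrates $f$ over $(x,\infty)$ while the admissible weights form $A_p^+$, one must feed in $M^+$ but read off oscillation through a right-adapted sharp operator, and verify that the one-sided Fefferman--Stein inequality holds precisely for $A_\infty^+$ weights --- equivalently, a good-$\lambda$ inequality in the one-sided $A_\infty^+$ setting. Once that is in hand, the kernel estimate $M^{\#}(\widetilde{T}^+ f)\le C\,M^+ f$ is routine, if orientation-sensitive.
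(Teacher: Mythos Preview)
The paper does not prove this statement at all: it is quoted from reference~[1] (Aimar, Forzani, Mart\'{i}n-Reyes) as a known background result, and no argument is supplied in the paper. There is therefore no proof in the paper to compare your proposal against.

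On the merits of your proposal itself: the reflection reduction and the sufficiency direction are essentially the strategy of the cited source. The one-sided Coifman--Fefferman inequality is obtained there via a one-sided sharp maximal function and the corresponding Fefferman--Stein inequality for $A_\infty^+$ weights, together with the pointwise bound $M^{+,\#}(\widetilde{T}^{+}f)\le C\,M^{+}f$; your sketch lines up with that, though you should be careful with the precise handedness (the one-sided Fefferman--Stein inequality in the literature controls $M^{-}g$ rather than $g$ directly, and the sharp operator has a specific asymmetric definition involving the positive part).

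Your necessity argument, however, has a genuine gap. From the stated hypotheses on $K$ (the size bound $(1.1)$, the smoothness $(1.2)$, uniform boundedness of truncated integrals, and existence of the principal value) one cannot deduce any lower bound of the form $|K(x)|\ge c/|x|$ near the origin; in particular, nothing in ``the existence and size of the principal value'' forces $K$ to be nondegenerate. Without such a lower bound the inequality $|\widetilde{T}^{+}f(x)|\ge \frac{c}{h}\int_a^{a+h}f$ fails in general, and indeed the ``only if'' direction is simply false as stated (take $K\equiv 0$). The theorem in the paper is stated somewhat loosely; the result in~[1] that is actually used downstream is only the sufficiency $w\in A_p^{+}\Rightarrow \widetilde{T}^{+}$ bounded on $L^p(w)$, and necessity in the literature requires an additional nondegeneracy hypothesis on $K$ that is not recorded here.
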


The above result is the one-sided version of weighted norm inequality of singular integral due to Coiffman and Fefferman [2].

In 1992, Lu and Zhang [5] gave the weighted result of Theorem 0.1.

\begin{theorem} Suppose $K(x,y)$ satisfies $\mathrm{(1.1)}$ and $\mathrm{(1.2)}$. If the operator
$\widetilde{T}$
is of type $(L^{2},L^{2})$, then for any real polynomial $P(x,y)$, the oscillatory integrals operator
$T$
is of type $(L^{p}(w),L^{p}(w))$, $w\in A_{p}$ and $1<p<\infty$. Here its norm depends only on the total degree of $P$ and $A_{p}(w)$, but not on the coefficients of $P$.
\end{theorem}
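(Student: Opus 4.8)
The plan is to run the Ricci--Stein induction on the total degree $d$ of $P$, mirroring the proof of Theorem 0.1 but feeding in, at the base step, the Coifman--Fefferman weighted estimate [2] for $\widetilde T$ in place of its unweighted $L^p$ bound. First I would reduce to phases with no pure monomials: writing $P(x,y)=\sum_{j,k}a_{jk}x^jy^k$, the part with $k=0$ equals a factor $e^{iP_1(x)}$ of modulus one that comes outside the integral and leaves $\|Tf\|_{L^p(w)}$ unchanged, while the part with $j=0$ equals $e^{iP_2(y)}$, which is absorbed into $f$ without altering $\|f\|_{L^p(w)}$ (the weight is untouched, as both factors are unimodular). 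Hence one may assume every monomial of $P$ has $j\ge1$ and $k\ge1$. For $\deg P\le1$ this forces $P\equiv0$, so $T=\widetilde T$, and the conclusion is exactly the weighted Calder\'on--Zygmund bound for $w\in A_p$ (valid since $\widetilde T$ is assumed of type $(L^2,L^2)$); this is the base case.

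For the inductive step I assume the theorem for every polynomial of degree at most $d-1$, with a norm depending only on $d-1$, $p$ and $A_p(w)$. I would split $K(x-y)=\sum_m K(x-y)\psi(2^{-m}(x-y))$ with $\psi$ supported in $\{1/2\le|t|\le2\}$, giving $T=\sum_m T_m$, where $T_m$ is concentrated on $|x-y|\approx 2^m$. Rescaling $x,y$ by $2^m$ keeps the kernel inside the class $(1.1)$--$(1.2)$ uniformly in $m$, by the scaling invariance of those conditions, and --- crucially for the weighted statement --- dilation preserves both $w\in A_p$ and the value of $A_p(w)$. I would then separate the scales into two regimes according to the size of the rescaled degree-$d$ part $P_d$ of the phase. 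On the scales where the phase is tame I compare $T_m$ with the operator whose phase is $P-P_d$ (of degree at most $d-1$): there $e^{iP_d}$ is slowly oscillating, so the comparison error is controlled, and the comparison operator is handled by the induction hypothesis with a uniform weighted bound. On the remaining scales the phase oscillates, and a van der Corput / integration-by-parts estimate --- using that $P_d$ has no pure terms, so its mixed second derivative is not identically zero, together with the assumed $L^2$-boundedness of $\widetilde T$ --- produces an $L^2$ decay for $T_m$; interpolating this against the uniform bound $|T_mf|\le CMf$ yields an unweighted decay $\|T_m\|_{L^p\to L^p}\le C2^{-\epsilon m}$.

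The main obstacle is assembling the per-scale pieces into a weighted bound, since the normalizing dilations act isometrically on $L^p(\mathbb R)$ but not on $L^p(w)$, so the decay must be transported through the weight. Here I would use exactly the device on which this paper is built. Each $T_m$ satisfies the pointwise domination $|T_m f|\le CMf$, because $|e^{iP}|=1$ and $K(x-y)\psi(2^{-m}(x-y))$ is bounded by $C2^{-m}$ on $|x-y|\lesssim 2^m$; hence by Muckenhoupt's theorem $T_m$ is bounded on $L^p(v)$, uniformly in $m$, for every $v\in A_p$. By the reverse H\"older inequality there is $\delta>0$ with $w^{1+\delta}\in A_p$, so, interpolating with change of measures between the unweighted $L^p$ decay $2^{-\epsilon m}$ and the uniform $L^p(w^{1+\delta})$ bound, the target weight is realized as $w=(w^{1+\delta})^{\theta}$ with $\theta=1/(1+\delta)$ and one obtains a weighted estimate for $T_m$ with a strictly positive geometric decay $2^{-\epsilon(1-\theta)m}$, summable in $m$. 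The remaining bookkeeping is to verify that every constant --- from the dilations, the tame-scale comparison, and the interpolation --- depends only on $d$, $p$ and $A_p(w)$ and never on the coefficients of $P$; this coefficient-independence is what the scaling normalization is arranged to enforce, and it is what the theorem asserts.
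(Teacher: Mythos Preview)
Your proposal is correct and follows essentially the same strategy as the paper (which in turn adapts Lu--Zhang [5]): the dyadic pieces are handled by combining the Ricci--Stein unweighted $L^p$ decay with the uniform $L^p(w^{1+\delta})$ bound coming from $|T_mf|\le CMf$ and the reverse H\"older property $w^{1+\delta}\in A_p$, and then Stein--Weiss interpolation with change of measures converts this into summable weighted decay---exactly the device you identify. The only difference is organizational: the paper runs a lexicographic double induction on the pair $(k,l)$ of degrees in $x$ and $y$ separately, normalizes the leading coefficient $|a_{kl}|=1$ by a dilation first, and treats the single local piece $\{0<y-x<1\}$ via a translation-by-$h$ comparison rather than a tame/oscillating threshold over many scales; your induction on total degree after stripping pure monomials is an equivalent packaging of the same Ricci--Stein scheme.
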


Inspired by [1] and [5], we will study the one-sided version of
Theorem 0.5 by the aid of interpolation of operators with change of
measures and the weak reverse H\"{o}lder inequality. Throughout this paper the letter $C$ will denote a
positive constant which may vary from line to line but will remain
independent of the relevant quantities.

%%%%%%%%%%%%%

\section{Main Results}
We first give the definition of one-sided oscillatory integral operator $T^{+}(T^{-})$:
$$\begin{array}{rl}
\displaystyle T^{+}f(x)&=\displaystyle\lim_{\varepsilon\rightarrow0^{+}}\int_{x+\varepsilon}^{\infty}e^{iP(x,y)}K(x-y)f(y)dy\\&=\displaystyle \mathrm{p.v.}\int_{x}^{\infty}e^{iP(x,y)}K(x-y)f(y)dy
\end{array}$$
and
$$\begin{array}{rl}
\displaystyle
T^{-}f(x)&=\displaystyle\lim_{\varepsilon\rightarrow0^{+}}\int_{-\infty}^{x-\varepsilon}e^{iP(x,y)}K(x-y)f(y)dy\\&=\displaystyle \mathrm{p.v.}\int_{-\infty}^{x}e^{iP(x,y)}K(x-y)f(y)dy,
\end{array}$$
where $P(x,y)$ is a real polynomial defined on $\mathbb{R}\times\mathbb{R}$, and Kernel $K$ is a standard Calder\'{o}n-Zygmund kernel with support in $\mathbb{R}^{-}=(-\infty,0)$ and $\mathbb{R}^{+}=(0,+\infty)$, respectively.

Now, we may state our results as follows:

\begin{theorem} Suppose Kernel $K$ satisfies $\mathrm{(1.1)}$ and $\mathrm{(1.2)}$.\\
$\mathrm{(1)}$\,\,
If the operator
$\widetilde{T}^{+}$
is of type $(L^{2}, L^{2})$, then for any real polynomial $P(x,y)$, the oscillatory integrals operator
$T^{+}$
is of type $(L^{p}(w), L^{p}(w))$ for $w\in A^{+}_{p}$ , $1<p<\infty$.\\
$\mathrm{(2)}$\,\,
If the operator
$\widetilde{T}^{-}$
is of type $(L^{2}, L^{2})$, then for any real polynomial $P(x,y)$, the oscillatory integrals operator
$T^{-}$
is of type $(L^{p}(w), L^{p}(w))$ for $w\in A^{-}_{p}$ , $1<p<\infty$.

Here their norms depend only on the total degree of $P$, $A_{p}^{+}(w)$ and $A_{p}^{-}(w)$, but not on the coefficients of $P$.
\end{theorem}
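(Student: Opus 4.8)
The plan is to establish part (1) and to deduce part (2) from it by the reflection $x\mapsto -x$, which interchanges $T^{+}$ and $T^{-}$, the supports $\mathbb{R}^{-}$ and $\mathbb{R}^{+}$, and the classes $A_{p}^{+}$ and $A_{p}^{-}$. I would argue by induction on the total degree $d$ of $P$, proving the quantitative statement that $\|T^{+}f\|_{L^{p}(w)}\le C\|f\|_{L^{p}(w)}$ with $C=C(d,p,A_{p}^{+}(w))$ depending on the weight only through its $A_{p}^{+}$ constant and never on the coefficients of $P$. First I would reduce the phase: a monomial of $P$ in $x$ alone factors out as a unimodular $e^{iq(x)}$, and a monomial in $y$ alone multiplies $f(y)$ by a unimodular factor, so neither changes $\|T^{+}f\|_{L^{p}(w)}$ or $\|f\|_{L^{p}(w)}$; hence I may assume $P(x,y)=\sum_{j,k\ge 1}a_{jk}x^{j}y^{k}$. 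The base case $d=0$ then forces $P\equiv 0$, i.e. $T^{+}=\widetilde{T}^{+}$, whose $L^{p}(w)$-boundedness for $w\in A_{p}^{+}$ is the weighted theory of one-sided Calder\'{o}n--Zygmund operators [1], available because $\widetilde{T}^{+}$ is bounded on $L^{2}$ and $K$ is a one-sided standard kernel.

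For the inductive step I would exploit that the $A_{p}^{+}$ constant is invariant under the dilations $f\mapsto f(r\,\cdot)$. Conjugating $T^{+}$ by such a dilation replaces $P(x,y)$ by $P(rx,ry)$, hence multiplies $a_{jk}$ by $r^{j+k}$, replaces $w$ by $w(r\,\cdot)$, and leaves both the standard-kernel constants of $K$ and the number $A_{p}^{+}(w)$ unchanged. Since the target bound is required to see $w$ only through $A_{p}^{+}(w)$, I may therefore normalize the coefficient of one top-degree mixed monomial $x^{j}y^{k}$, $j+k=d$, to have modulus $1$. With this normalization the critical scale is $|x-y|\sim 1$, and I would split $T^{+}=T^{+}_{\mathrm{near}}+T^{+}_{\mathrm{far}}$ according to $|x-y|\le 1$ or $|x-y|>1$.

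The far part I would decompose dyadically, $T^{+}_{\mathrm{far}}=\sum_{\nu>0}T_{\nu}$ with $T_{\nu}$ supported in $|x-y|\sim 2^{\nu}$, and treat by interpolation of operators with change of measures. On one hand van der Corput's lemma gives the unweighted decay $\|T_{\nu}f\|_{L^{2}}\le C\,2^{-\varepsilon\nu}\|f\|_{L^{2}}$, the normalized leading term forcing a lower bound on a derivative of the phase independently of the other coefficients. On the other hand, since for $T^{+}$ the far kernel is supported in $y\in(x+2^{\nu},x+2^{\nu+1})$, one has the pointwise domination $|T_{\nu}f(x)|\le C\,M^{+}f(x)$, whence by Sawyer's theorem [12] the weighted bound $\|T_{\nu}f\|_{L^{p}(u)}\le C\|f\|_{L^{p}(u)}$ holds for every $u\in A_{p}^{+}$, uniformly in $\nu$. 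Interpolating these two estimates with change of measures---and using the weak reverse H\"{o}lder inequality both to write the power $u=w^{1+\delta}\in A_{p}^{+}$ for a small $\delta>0$ and to choose the interpolation parameter so that the exponent is exactly $p$ while the interpolated weight is exactly $w$---would produce $\|T_{\nu}f\|_{L^{p}(w)}\le C\,2^{-\varepsilon'\nu}\|f\|_{L^{p}(w)}$, which is summable in $\nu$. The near part $T^{+}_{\mathrm{near}}$ carries the genuine singularity of $K$, and I would reduce it, by the localization-and-rescaling argument of Ricci and Stein [10] adapted to the weighted one-sided framework of [5], to the inductive hypothesis at degree $\le d-1$ together with the base-case bound for $\widetilde{T}^{+}$.

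The hard part will be this near part, together with the demand that every constant be coefficient-free. The dilation normalization controls only the top coefficient; van der Corput supplies coefficient-free decay for the far part, and the $M^{+}$-domination supplies a weighted bound genuinely uniform in $\nu$; but the reduction of $T^{+}_{\mathrm{near}}$ to lower degree must be arranged so that the lower-order coefficients, which rescaling does not normalize, enter only through objects controlled by the inductive hypothesis, and so that the resulting weighted constant again depends on $w$ solely through $A_{p}^{+}(w)$. Verifying that the one-sided support of $K$, the one-sided class $A_{p}^{+}$, and the weak reverse H\"{o}lder gain remain mutually compatible at every scale---so that neither the interpolation nor the induction ever reintroduces dependence on the coefficients---is where the delicate bookkeeping of the argument concentrates.
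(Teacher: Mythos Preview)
Your overall architecture coincides with the paper's: induction on the degree of $P$, dilation to normalize the leading coefficient, a near/far split at scale $1$, pointwise domination of each far piece by $M^{+}$, the weak reverse H\"older gain $w^{1+\varepsilon}\in A_p^{+}$, and Stein--Weiss interpolation with change of measures to sum the far pieces; the near piece is then reduced to the inductive hypothesis. Two points, however, deserve correction or sharpening.

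\textbf{The interpolation step does not close as you state it.} You feed into Stein--Weiss the pair
\[
\|T_\nu f\|_{L^2}\le C\,2^{-\varepsilon\nu}\|f\|_{L^2}
\quad\text{and}\quad
\|T_\nu f\|_{L^p(w^{1+\delta})}\le C\|f\|_{L^p(w^{1+\delta})},
\]
and then ask for the interpolated exponent to be exactly $p$. But with $p_0=2$ and $p_1=p$ the relation $\tfrac{1}{p}=\tfrac{\theta}{2}+\tfrac{1-\theta}{p}$ forces $\theta=0$ whenever $p\neq 2$, so no decay survives and the interpolated weight is $w^{1+\delta}$, not $w$. The paper avoids this by taking both endpoints at the \emph{same} exponent $p$: it quotes from Ricci--Stein and Lu--Zhang the unweighted $L^p$ decay
\[
\|T_j^{+}f\|_{L^p}\le C\,2^{-j\delta}\|f\|_{L^p},
\]
with $C$ depending only on the total degree of $P$, and interpolates this against $\|T_j^{+}f\|_{L^p(w^{1+\varepsilon})}\le C\|f\|_{L^p(w^{1+\varepsilon})}$. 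With $p_0=p_1=p$, $u_0=v_0=1$, $u_1=v_1=w^{1+\varepsilon}$ and $\theta=\varepsilon/(1+\varepsilon)$ one lands precisely on $L^p(w)$ with a surviving factor $2^{-j\delta\theta}$. If you want to keep your $L^2$ van der Corput input, you must first upgrade it to $L^p$ decay (e.g.\ by interpolating the $L^2$ decay against the trivial $L^q$ bound coming from $|T_\nu f|\le CM^{+}f$, for $q$ on the appropriate side of $p$) before invoking Stein--Weiss with weights; as written, the step fails.

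\textbf{The induction scheme and the near part.} The paper does not induct on the total degree, but lexicographically on $(k,l)=(\deg_x P,\deg_y P)$: it writes $P(x,y)=a_{kl}x^ky^l+R(x,y)$ with $R$ already covered by the hypothesis, and for the near piece uses the translation identity $P(x,y)=a_{kl}(x-h)^k(y-h)^l+R(x,y,h)$ to compare $e^{iP}$ with $e^{i(R(x,y,h)+a_{kl}(y-h)^{k+l})}$ on $|x-h|<\tfrac14$. The difference of phases is $O(|x-y|)$ there, so the error term is controlled by $M^{+}$, while the main term has a phase to which the inductive hypothesis applies (the extra $a_{kl}(y-h)^{k+l}$ is absorbed into $f$ as a unimodular factor). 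Your total-degree induction with pure-$x$/pure-$y$ stripping is a legitimate alternative, but you should be aware that the near-part reduction you defer to ``localization and rescaling'' is, in the paper, precisely this translation-by-$h$ device, and that it is what guarantees the lower-order coefficients never enter the constants.
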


The rest of this paper is devoted to the argument for Theorem 1.1.
Section 2 contains some preliminaries which are essential to our
proof. In Section 3, we prove Theorem 1.1, this part is partially
motivated by [4] and [5].

\section{Preliminaries}

\begin{lemma}\label{Lemma} $[11], [12]$\,\,
Let $1<p<\infty$, and $w\geq 0$ be locally integrable. Then the following statements are equivalent\\
$\mathrm{(1)}$\,\, $w\in A_{p}^{+}$.\\
$\mathrm{(2)}$\,\,  $w^{1-p'}\in A_{p'}^{-}$. \\
$\mathrm{(3)}$\,\,  There exist $w_{1}\in A_{1}^{+}$ and $w_{2}\in A_{1}^{-}$ such that $w=w_{1}(w_{2})^{1-p}$.
\end{lemma}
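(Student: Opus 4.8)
The plan is to separate the two mechanisms at work: a purely algebraic conjugate-exponent computation yields $(1)\Leftrightarrow(2)$, while the equivalence of these with $(3)$ is a factorization statement. For $(1)\Leftrightarrow(2)$ I would put $\sigma=w^{1-p'}$, so that $w=\sigma^{1-p}$ because $(1-p')(1-p)=1$. Writing out the $A_{p'}^{-}$ condition for $\sigma$ with $q=p'$, and using $q'=p$, $1-q'=1-p$, hence $\sigma^{1-q'}=\sigma^{1-p}=w$, one obtains
\[
\left(\frac{1}{h}\int_{a}^{a+h}w^{1-p'}\right)\left(\frac{1}{h}\int_{a-h}^{a}w\right)^{p'-1}\leq C .
\]
Raising this to the power $p-1$ and using $(p'-1)(p-1)=1$ reproduces exactly the $A_{p}^{+}$ inequality for $w$, with $A_{p}^{+}(w)=A_{p'}^{-}(w^{1-p'})^{\,p-1}$. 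Every step is reversible, so this settles $(1)\Leftrightarrow(2)$; there is no real obstacle here beyond the bookkeeping of conjugate exponents.

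For $(3)\Rightarrow(1)$ the key is a one-sided shift estimate: if $u\in A_{1}^{+}$, then for all $a$ and $h>0$,
\[
\frac{1}{h}\int_{a-h}^{a}u\leq 2A_{1}^{+}(u)\inf_{(a,a+h)}u ,
\]
the infimum being essential, because any $x\in(a,a+h)$ sees the whole of $(a-h,a)$ inside an $M^{-}$-averaging interval of length at most $2h$; the mirror estimate holds for $v\in A_{1}^{-}$, with $\frac{1}{h}\int_{a}^{a+h}v$ controlled by $\inf_{(a-h,a)}v$. Given $w=w_{1}w_{2}^{1-p}$, I would note $w^{1-p'}=w_{1}^{1-p'}w_{2}$, bound $w_{2}^{1-p}\leq(\inf_{(a-h,a)}w_{2})^{1-p}$ and $w_{1}^{1-p'}\leq(\inf_{(a,a+h)}w_{1})^{1-p'}$ (legitimate since both exponents are negative), and apply the two shift estimates to the left and right averages respectively. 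Inserting these into the $A_{p}^{+}$ product, the infimum factors cancel because $1+(1-p')(p-1)=0$ and $(1-p)+(p-1)=0$, leaving only $(2\max\{A_{1}^{+}(w_{1}),A_{1}^{-}(w_{2})\})^{p}$. Hence $w\in A_{p}^{+}$.

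The hard part is $(1)\Rightarrow(3)$, which is the one-sided analogue of the Jones factorization theorem, and I expect it to be the main obstacle. Here I would run a Rubio de Francia iteration adapted to the two one-sided maximal operators. Using that $M^{-}$ and $M^{+}$ are bounded on the relevant $L^{s}$ spaces by Theorem 0.3 (after converting, via $(1)\Leftrightarrow(2)$, the given $A_{p}^{+}$ membership into the companion $A^{-}$ membership), the operator $R^{-}g=\sum_{k\geq0}(M^{-})^{k}g/(2\|M^{-}\|)^{k}$ produces, for any seed $g$, a weight with $M^{-}(R^{-}g)\leq 2\|M^{-}\|R^{-}g$, i.e.\ an $A_{1}^{+}$ weight, and symmetrically $R^{+}$ manufactures $A_{1}^{-}$ weights. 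The delicate point is choosing the seeds and arranging a fixed-point (or duality) scheme so that the two iterations interlock and the product $w_{1}w_{2}^{1-p}$ reconstructs precisely the given $w$; one must also track orientations carefully, so that $M^{-}$ (and not $M^{+}$) feeds the $A_{1}^{+}$ factor and vice versa. Since the statement is quoted from [11] and [12], in the paper itself I would simply invoke their factorization, the sketch above indicating the route one takes to establish it from scratch.
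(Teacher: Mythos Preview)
The paper does not prove this lemma at all: it is stated with the citation $[11],[12]$ and the text moves immediately to Lemma~2.2, so there is no ``paper's own proof'' to compare against. You anticipated this correctly in your final sentence.

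As for the content of your sketch, it is sound and follows the standard route. The duality $(1)\Leftrightarrow(2)$ is exactly the conjugate-exponent bookkeeping you wrote, and your computation $A_{p}^{+}(w)=A_{p'}^{-}(w^{1-p'})^{p-1}$ is correct. For $(3)\Rightarrow(1)$ your ``shift estimate'' and the ensuing cancellation of the infimum factors are right; the exponent check $1+(1-p')(p-1)=0$ and $(1-p)+(p-1)=0$ is exactly what makes the product collapse. For $(1)\Rightarrow(3)$ you correctly identify this as the one-sided Jones factorization, proved in the cited references via a Rubio de Francia iteration with the pair $M^{-},M^{+}$; your orientation remark (that iterating $M^{-}$ produces $A_{1}^{+}$ weights and vice versa) is the point one must keep straight, and you have it. Since the lemma is quoted, your closing plan---invoke $[11],[12]$ in the paper and keep the sketch as background---matches precisely what the authors did.
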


According to the definition of $A_{p}^{+}$, we can easily obtain

\begin{lemma}\label{Lemma}
Let $1<p<\infty$ and $w\in A_{p}^{+}$. Then $ A_{p}^{+}(\delta^{\lambda}(w))=A_{p}^{+}(w)$,
where $\delta^{\lambda}(w)(x)=w(\lambda x)$ for all $\lambda>0$.
\end{lemma}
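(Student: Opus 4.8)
The plan is to derive the identity directly from the definition of the $A_p^+$ constant as a supremum, by means of a single change of variables that converts the defining ratio for $\delta^\lambda(w)$ into the defining ratio for $w$ evaluated at a rescaled pair of parameters. Recall that
$$A_p^+(w)=\sup_{a\in\mathbb{R},\,h>0}\left(\frac{1}{h}\int_{a-h}^{a}w(x)\,dx\right)\left(\frac{1}{h}\int_{a}^{a+h}w(x)^{1-p'}\,dx\right)^{p-1}.$$
Write $v=\delta^\lambda(w)$, so that $v(x)=w(\lambda x)$.

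First I would fix an arbitrary $a\in\mathbb{R}$ and $h>0$ and substitute $u=\lambda x$ in each of the two integrals appearing in the $A_p^+$ ratio for $v$. The substitution produces a Jacobian factor $1/\lambda$ and maps the intervals $[a-h,a]$ and $[a,a+h]$ onto $[\lambda a-\lambda h,\lambda a]$ and $[\lambda a,\lambda a+\lambda h]$, respectively. Setting $a'=\lambda a$ and $h'=\lambda h$, the factor $1/\lambda$ combines with $1/h$ to give exactly $1/h'$, so that
$$\frac{1}{h}\int_{a-h}^{a}v(x)\,dx=\frac{1}{h'}\int_{a'-h'}^{a'}w(u)\,du,\qquad \frac{1}{h}\int_{a}^{a+h}v(x)^{1-p'}\,dx=\frac{1}{h'}\int_{a'}^{a'+h'}w(u)^{1-p'}\,du.$$
Consequently the $A_p^+$ ratio for $v$ at the pair $(a,h)$ coincides exactly with the $A_p^+$ ratio for $w$ at the pair $(a',h')$.

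Finally I would observe that, for fixed $\lambda>0$, the map $(a,h)\mapsto(\lambda a,\lambda h)$ is a bijection of $\mathbb{R}\times(0,\infty)$ onto itself; taking the supremum over all admissible pairs on both sides then yields $A_p^+(\delta^\lambda(w))=A_p^+(w)$. The only point demanding any care — and it is a mild one — is the bookkeeping of the dilation factor: one must verify that the single Jacobian $1/\lambda$ precisely cancels the discrepancy between the normalizations $1/h$ and $1/h'=1/(\lambda h)$, which is exactly what renders the constant genuinely invariant rather than merely comparable. No obstacle of substance arises, and the same argument applies verbatim to $A_p^-$.
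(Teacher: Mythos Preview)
Your proof is correct and follows essentially the same route as the paper's: a single change of variables $u=\lambda x$ that identifies the $A_p^+$ ratio for $\delta^\lambda(w)$ at one set of parameters with the ratio for $w$ at the dilated parameters, followed by a bijection of the parameter space to conclude equality of the suprema. The only cosmetic difference is that the paper carries out the computation using the three-point form of the $A_p^+$ condition from Remark~0.2 (with $a<b<c$ and normalization $(c-a)^{-p}$), whereas you use Sawyer's symmetric-interval form; both collapse to the same Jacobian bookkeeping, and your explicit mention of the bijection $(a,h)\mapsto(\lambda a,\lambda h)$ in fact makes the \emph{equality} (as opposed to mere inequality) more transparent than in the paper's write-up.
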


\begin{proof} For $1<p<\infty$, if $w\in A_{p}^{+}$, then
$$\sup_{a<b<c}\frac{1}{(c-a)^{p}}\int_{a}^{b}w(x)dx\left(\int_{b}^{c}w(x)^{1-p'}dx\right)^{p-1}\leq C.$$
For $\lambda>0$, $a'=\lambda a$, $b'=\lambda b$, $c'=\lambda c$ and $d'=\lambda d$, we have

\begin{eqnarray*}
&&\frac{1}{(c-a)^{p}}\int_{a}^{b}w(\lambda x)dx\left(\int_{b}^{c}w(\lambda x)^{1-p'}dx\right)^{p-1}\\
&=& \frac{1}{(c-a)^{p}}\int_{a\lambda}^{b\lambda}w(x)\lambda ^{-1}dx\left(\int_{b\lambda}^{c\lambda}w(x)^{1-p'}\lambda ^{-1}dx\right)^{p-1}\\
&=& \frac{1}{\left(\lambda(c-a)\right)^{p}}\int_{a\lambda}^{b\lambda}w(x)dx\left(\int_{b\lambda}^{c\lambda}w(x)^{1-p'}dx\right)^{p-1}\\
&=& \frac{1}{(c'-a')^{p}}\int_{a'}^{b'}w(x)dx\left(\int_{b'}^{c'}w(x)^{1-p'}dx\right)^{p-1}\\
&\leq& C.\end{eqnarray*}

The proof is complete.
\end{proof}

\begin{definition} $[11]$\,\,
A weight $w$ satisfies the one-sided reverse H\"{o}lder $RH_{r}^{+}$ condition, if there exists $C>0$ such that for any $a<b$
$$\int_{a}^{b}w(x)^{r}dx\leq C(M(w\chi_{(a,b)})(b))^{r-1}\int_{a}^{b}w(x)dx,\eqno(2.1)$$
where $1<r<\infty$.
\end{definition}

The smallest such constant will be called the $RH_{r}^{+}$ constant of $w$ and will be denoted by $RH_{r}^{+}(w)$. Corresponding to
classical reverse H\"{o}lder inequality, (2.1) is also named weak reverse H\"{o}lder inequality.

\begin{definition} $[11]$\,\,
A weight $w$ satisfies the one-sided reverse H\"{o}lder $RH_{\infty}^{+}$ condition, if there exists $C>0$ such that
$w(x)\leq Cm^{+}w(x)$ for almost all $x\in \mathbb{R}$, where $m^{+}$ is the one-sided minimal operator which defined as
$$m^{+}f(x)=\inf_{h>0}\frac{1}{h}\int_{x}^{x+h}|f|dy.$$
\end{definition}

The smallest such constant will be called the $RH_{\infty}^{+}$ constant of $w$ and will be denoted by $RH_{\infty}^{+}(w)$, it is clear that $RH_{\infty}^{+}(w)\geq 1$.

The following lemma gives several characterizations of $RH_{r}^{+}$ where the constants are not necessary the same.

\begin{lemma} $[11]$\,\,
Let $a<b<c<d$, $1<r<\infty$, and $w\geq 0$ be locally integrable. Then the following statements are equivalent\\
$\mathrm{(1)}$\,\, $\int_{a}^{b}w(x)^{r}dx\leq C(M(w\chi_{(a,b)})(b))^{r-1}\int_{a}^{b}w(x)dx$.\\
$\mathrm{(2)}$\,\, $\frac{1}{b-a}\int_{a}^{b}w(x)^{r}dx\leq C(\frac{1}{c-b}\int_{b}^{c}w(x)dx)^{r}$ with $b-a=2(c-b)$.\\
$\mathrm{(3)}$\,\, $\frac{1}{b-a}\int_{a}^{b}w(x)^{r}dx\leq C(\frac{1}{d-c}\int_{c}^{d}w(x)dx)^{r}$ with $b-a=d-b=2(d-c)$.\\
$\mathrm{(4)}$\,\, $\frac{1}{b-a}\int_{a}^{b}w(x)^{r}dx\leq C(\frac{1}{c-b}\int_{b}^{c}w(x)dx)^{r}$ with $b-a=c-b$.\\
$\mathrm{(5)}$\,\, $\frac{1}{b-a}\int_{a}^{b}w(x)^{r}dx\leq C(\frac{1}{d-c}\int_{c}^{d}w(x)dx)^{r}$ with $b-a=d-c=\gamma(d-a), 0<\gamma\leq\frac{1}{2}$.
\end{lemma}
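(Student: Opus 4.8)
The plan is to prove the five statements equivalent through a single cyclic chain of implications, say $(1)\Rightarrow(2)\Rightarrow(3)\Rightarrow(4)\Rightarrow(5)\Rightarrow(1)$, exploiting that the constant $C$ is allowed to change at each arrow. Conditions $(2)$--$(5)$ all have the common shape ``the $r$-th power average of $w$ over a left interval $I$ is dominated by the $r$-th power of an $L^1$ average of $w$ over an interval $J$ lying to its right,'' so the transitions among them are purely geometric. First I would record the device used repeatedly: Jensen's inequality, which for any interval gives $\frac{1}{|J|}\int_J w\le\big(\frac{1}{|J|}\int_J w^r\big)^{1/r}$ and lets one pass from an $L^1$ average on $J$ to an $L^r$ average and back. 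With this, changing the length ratio $|I|:|J|$ (e.g. from the $2:1$ of $(2)$ to the $1:1$ of $(4)$) is handled by subdividing $I$ into pieces on which the target right interval applies, or by shrinking $J$ and paying a fixed power of the ratio.

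The one genuinely non-cosmetic feature among $(2)$--$(5)$ is the gap: in $(3)$ and $(5)$ the controlling interval $J=(c,d)$ is separated from $I=(a,b)$ by $(b,c)$, whereas in $(2)$ and $(4)$ they are adjacent. To insert or remove a gap I would apply the (already available) two-interval inequality a second time, now to the gap interval itself: to bound the mass $\int_b^c w$ on the gap by the mass $\int_c^d w$ on $J$, apply the adjacent form to $(b,c)$ and $(c,d)$ and use Jensen. This propagation to the right shows the missing mass on the gap is controlled by the mass on $J$, so adjacent and gapped versions differ only by a constant; Lemma 2.2 guarantees the rescalings involved do not affect the relevant structure.

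Then comes the crux: linking the maximal-function form $(1)$ with the two-interval forms, the hardest step being $(5)\Rightarrow(1)$. Here I would decompose $(a,b)$ from the right endpoint using the left-averages $\frac{1}{b-s}\int_s^b w$, whose supremum over $s\in[a,b)$ is exactly $A:=M(w\chi_{(a,b)})(b)$, so that $\int_s^b w\le A(b-s)$ for every $s$. On each piece I apply the two-interval estimate with controlling interval reaching to $b$, and sum. The delicate point, and the main obstacle, is to produce on the right-hand side the sharp factor $\int_a^b w$ rather than the cruder $A(b-a)$: a fixed dyadic subdivision toward $b$ loses a logarithmic factor whenever $w$ concentrates near $b$, so one must instead run an adaptive Calder\'on--Zygmund (Whitney) stopping time that balances the accumulated mass against the scale.

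Finally, for $(1)\Rightarrow(2)$ I would apply $(1)$ on the enlarged interval $(a,c)$, whose left-maximal value at $c$ already sees the controlling interval $(b,c)$, and combine it with the ``no sharp drop'' information encoded in $(1)$: testing $(1)$ on short intervals straddling $b$ forces a rightward doubling $\int_a^b w\le C\int_b^c w$, which upgrades the maximal bound into the comparison with the $(b,c)$ average required by $(2)$. I expect essentially all the difficulty to be concentrated in this maximal-function/two-interval bridge; the remaining arrows are bookkeeping with Jensen's inequality and the dilation invariance of Lemma 2.2.
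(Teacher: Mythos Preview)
The paper does not prove this lemma at all: it is quoted verbatim from reference~[11] (Riveros and de~la~Torre) and then used as a black box inside the proof of Lemma~2.7. There is therefore no argument in the paper to compare your proposal against; if you want a benchmark you would have to consult~[11] directly.

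On the substance of your outline: the purely geometric transitions among $(2)$--$(5)$ are fine and will go through essentially as you describe, and your plan for $(5)\Rightarrow(1)$ via an adaptive stopping-time decomposition toward the right endpoint is the right idea and close to how the source handles it. The step that is not convincing as written is $(1)\Rightarrow(2)$. Condition~$(1)$, applied to an interval $(a,b)$, only involves $w$ on $(a,b)$ and the left averages $\frac{1}{b-s}\int_s^b w$ at~$b$; it never sees any mass to the right of~$b$. So ``testing $(1)$ on short intervals straddling $b$'' cannot by itself manufacture the rightward comparison $\int_a^b w\le C\int_b^c w$ that you need, and applying $(1)$ on the enlarged interval $(a,c)$ leaves you with $M(w\chi_{(a,c)})(c)$ and $\int_a^c w$ on the right, neither of which is a~priori controlled by $\frac{1}{c-b}\int_b^c w$. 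In~[11] this bridge passes through the nontrivial fact that $RH_r^+$ forces $w\in A_\infty^+$ (hence a one-sided doubling/comparability), and that implication requires a separate argument rather than a direct manipulation of~$(1)$. You should either route the chain so that this doubling is available when you need it, or budget a genuine sub-argument for it.
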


\begin{lemma} $[11]$\,\,
A weight $w\in A_{p}^{+}$, for $p>1$ if, and only if there exist
$0<\gamma<\frac{1}{2}$ and a constant $C_{\gamma}$ such that
$b-a=d-c=\gamma(d-a)$ for any $a<b<c<d$, then
$$
\int_{a}^{b}w(x)dx\left(\int_{c}^{d}w(x)^{1-p'}dx\right)^{p-1}\leq C_{\gamma}(b-a)^{p}.
$$
\end{lemma}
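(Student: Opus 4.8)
The plan is to establish the two implications separately; the forward one is a short monotonicity argument and the converse carries all the difficulty.

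For the direction $w\in A_p^+\Rightarrow$ the gapped inequality, fix $a<b<c<d$ with $b-a=d-c=\gamma(d-a)$. The idea is to absorb the gap $(b,c)$ into the left interval: applying the three-point definition of $A_p^+$ to the triple $a<c<d$ yields $\int_a^c w\,\bigl(\int_c^d w^{1-p'}\bigr)^{p-1}\le C(d-a)^p$. Since $\int_a^b w\le\int_a^c w$ by positivity and $d-a=\gamma^{-1}(b-a)$, this gives the claim with $C_\gamma=C\gamma^{-p}$. The argument works for \emph{every} $\gamma\in(0,\tfrac12)$, so in particular a suitable $\gamma$ exists.

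Conversely, suppose the gapped inequality holds for one fixed $\gamma$, and put $\rho=(1-2\gamma)/\gamma$ and $f=w^{1-p'}$; the hypothesis then reads $\int_I w\,(\int_J f)^{p-1}\le C_\gamma\ell^p$ whenever $I$ and $J$ are intervals of common length $\ell$ with $J$ lying a distance $\rho\ell$ to the right of $I$. I must upgrade this to the full condition $\int_a^b w\,(\int_b^c f)^{p-1}\le C(c-a)^p$ for arbitrary \emph{adjacent} intervals $(a,b)$ and $(b,c)$. Both conditions are invariant under translation and dilation (the latter as in Lemma 2.2), so the configuration may be normalized. My approach is a dyadic covering of $(b,c)$ together with a summation: write $(b,c)=\bigcup_{k\ge1}J_k$ with the $J_k$ shrinking as they accumulate at $b$, attach to each $J_k$ a left interval $I_k$ of equal length placed the admissible distance $\rho|J_k|$ to its left, apply the hypothesis to obtain $(\int_{J_k}f)^{p-1}\le C_\gamma|J_k|^p/\int_{I_k}w$, and then reassemble $\int_b^c f=\sum_k\int_{J_k}f$ using Hölder's inequality and $p'=p/(p-1)$.

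The essential obstruction is that the hypothesis always forces a strictly positive gap between its two test intervals, whereas the target demands that they share the endpoint $b$; the whole difficulty is to close this gap quantitatively. Concretely, the auxiliary intervals $I_k$ overlap near $b$, so their $w$-masses cannot simply be added, and for ratios $(b-a)/(c-b)$ far from $1$ the large pieces produce $I_k$ that threaten to leave $(a,b)$. I expect the cleanest route is first to reduce to equal-length adjacent intervals --- which one can reach either by this covering or by bootstrapping the hypothesis to values of $\gamma$ tending to $\tfrac12$, where the gap degenerates to a point --- and only afterwards pass to unequal lengths by a second, analogous splitting. Throughout, the point requiring the most care is to make the resulting geometric series converge with a constant depending only on $p$ and $\gamma$ and never on $a,b,c$ or $w$; the weak reverse H\"older inequality (Lemma 2.5) and the duality $w\in A_p^+\Leftrightarrow w^{1-p'}\in A_{p'}^-$ (Lemma 2.1) are the natural devices for taming the overlaps and summing.
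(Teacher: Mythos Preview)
The paper supplies no proof of this lemma: it is quoted from reference~[11] (Riveros and de~la~Torre), as the citation marker indicates, and is used as a black box in the proof of Lemma~2.7. There is therefore nothing in the paper to compare your argument against.

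Assessing your attempt on its own merits: the forward implication is correct and complete. The converse, however, is a programme rather than a proof. You have correctly isolated the difficulty --- the hypothesis always carries a strictly positive gap between its two test intervals, while the $A_p^+$ condition demands adjacent ones --- but the resolution you sketch is not carried out, and one of the tools you name cannot be used. Invoking Lemma~2.1 (the equivalence $w\in A_p^+\Leftrightarrow w^{1-p'}\in A_{p'}^-$) while proving $w\in A_p^+$ is circular: at this stage you possess only the gapped inequality, not membership in any $A_p^\pm$ class. Likewise Lemma~2.5 concerns $RH_r^+$, and you have not explained how to place either $w$ or $w^{1-p'}$ in such a class from the hypothesis alone. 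Beyond this, the dyadic covering step is left at the level of ``I expect the cleanest route is\ldots'': the overlap of the $I_k$ and the summation of $\bigl(\int_{J_k}f\bigr)^{1/(p-1)}$ are flagged as problems but not solved. A workable route (and the one taken in the source~[11]) is purely self-referential: show that the gapped condition with parameter $\gamma$ implies the same condition with a parameter $\gamma'$ strictly closer to $\tfrac12$, with explicit control on how $C_{\gamma'}$ depends on $C_\gamma$, and iterate until the gap collapses. No property of $w$ beyond the hypothesis is used at any stage.
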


Combining the results in [1], [6], [7] and [11], we can deduce Lemma 2.7.  In what follows, we will include its proof with slight modifications for the sake of completeness.
\begin{lemma}
Let $w\in A_{p}^{+}$. Then there exists $\varepsilon>0$ such that $w^{1+\varepsilon}\in A_{p}^{+}$.
\end{lemma}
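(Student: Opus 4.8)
The plan is to run the one-sided analogue of the classical self-improvement argument for $A_p$ weights, using the one-sided reverse H\"older inequality as the engine and the four-point characterization of Lemma 2.6 as the criterion to be verified. The two inputs I would quote from the cited references are: (i) since $w\in A_p^+\subset A_\infty^+$, the weight $w$ satisfies a one-sided reverse H\"older inequality, i.e.\ there is $s>1$ with $w\in RH_s^+$, which by Lemma 2.5(5) reads $\frac{1}{b-a}\int_a^b w^s\le C\bigl(\frac{1}{d-c}\int_c^d w\bigr)^s$ whenever $(a,b)$ lies to the left of $(c,d)$ with comparable lengths; and (ii) by Lemma 2.1(2) the conjugate weight $\sigma:=w^{1-p'}$ lies in $A_{p'}^-$, hence satisfies the minus-side analogue of the reverse H\"older inequality for some exponent $t>1$, controlling the $\sigma^t$-average over a right interval by the $\sigma$-average over a left interval. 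I would then fix $\varepsilon>0$ so small that $1+\varepsilon\le\min(s,t)$.

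With $\varepsilon$ fixed, the task is to verify the $A_p^+$ criterion of Lemma 2.6 for $w^{1+\varepsilon}$. I would choose a small parameter $\gamma'$ and take any four points $a<b<c<d$ with $b-a=d-c=\gamma'(d-a)$, so that the central gap $(b,c)$ is long compared with the two end intervals. Inside this gap I insert two auxiliary intervals $J_w$ and $J_\sigma$, with $J_w$ \emph{strictly to the left} of $J_\sigma$, whose lengths are arranged so that the length-ratio hypotheses of Lemma 2.5(5) (and of its minus-side counterpart) are met. Applying the plus reverse H\"older to the pair $(a,b)$, $J_w$ gives $\frac{1}{b-a}\int_a^b w^{1+\varepsilon}\le C\bigl(\frac{1}{|J_w|}\int_{J_w}w\bigr)^{1+\varepsilon}$, and applying the minus reverse H\"older to the pair $J_\sigma$, $(c,d)$ gives $\frac{1}{d-c}\int_c^d\sigma^{1+\varepsilon}\le C\bigl(\frac{1}{|J_\sigma|}\int_{J_\sigma}\sigma\bigr)^{1+\varepsilon}$. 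It is essential here that $J_w\ne J_\sigma$ and that $J_w$ precedes $J_\sigma$: a single common helper interval would produce a two-sided $A_p$ quantity, which is not controlled since $w$ is only assumed one-sided.

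Multiplying these two estimates and collecting the exponent $1+\varepsilon$, I obtain
$$\frac{1}{b-a}\int_a^b w^{1+\varepsilon}\left(\frac{1}{d-c}\int_c^d\sigma^{1+\varepsilon}\right)^{p-1}\le C\left[\left(\frac{1}{|J_w|}\int_{J_w}w\right)\left(\frac{1}{|J_\sigma|}\int_{J_\sigma}\sigma\right)^{p-1}\right]^{1+\varepsilon}.$$
Because $J_w$ sits to the left of $J_\sigma$, the bracket is exactly the $A_p^+$ quantity for $w$ on the configuration $J_w<J_\sigma$, which is bounded by a constant via Lemma 2.6 applied to $w$; raising a bounded quantity to the power $1+\varepsilon$ keeps it bounded. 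Converting the normalized averages back to the form demanded by Lemma 2.6 for $w^{1+\varepsilon}$ — all the lengths $b-a$, $d-c$, $|J_w|$, $|J_\sigma|$ being comparable to $\gamma'(d-a)$ — yields $\int_a^b w^{1+\varepsilon}\bigl(\int_c^d (w^{1+\varepsilon})^{1-p'}\bigr)^{p-1}\le C_{\gamma'}(b-a)^p$, which is precisely the criterion of Lemma 2.6 for $w^{1+\varepsilon}$. Hence $w^{1+\varepsilon}\in A_p^+$.

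The step I expect to be the main obstacle is the geometric bookkeeping in the middle of the argument: one must position $J_w$ and $J_\sigma$ inside the gap $(b,c)$ and choose $\gamma'$ small enough that the length-ratio hypotheses of the plus and minus reverse H\"older inequalities and of Lemma 2.6 for $w$ are satisfied simultaneously, while keeping every interval length comparable to $\gamma'(d-a)$ so that the resulting constant is uniform in $a,b,c,d$. Once these intervals are correctly aligned, the remaining analytic manipulations are routine.
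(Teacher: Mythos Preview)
Your proposal is correct and its closing step---inserting two ordered auxiliary intervals $J_w<J_\sigma$ in the gap, applying $RH^+$ on the left pair and $RH^-$ on the right pair, and then recognizing the $A_p^+$ quantity for $w$ on $(J_w,J_\sigma)$---is exactly how the paper finishes (there with the concrete choice $\gamma=\tfrac14$ and $J_w=(b,\tfrac{b+c}{2})$, $J_\sigma=(\tfrac{b+c}{2},c)$). The one substantive difference is upstream: you \emph{quote} $w\in RH_s^+$ and $\sigma=w^{1-p'}\in RH_t^-$ as consequences of $A_\infty^\pm$ from the references, whereas the paper \emph{derives} $w\in RH_r^+$ by hand through the factorization $w=w_1w_2^{1-p}$ of Lemma~2.1, proving first that $w_1\in A_1^+$ forces $w_1\in RH_r^+$ via a truncation and level-set argument, next that $w_2\in A_1^-$ forces $w_2^{1-p}\in RH_\infty^+$, and then chaining these along shifted intervals to get $w\in RH_r^+$ (with the $RH^-$ statement for $\sigma$ following by the duality of Lemma~2.1). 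Your route is shorter and entirely legitimate given the cited literature; the paper's route is self-contained and makes the dependence of the admissible $\varepsilon$ on the $A_1^\pm$ constants of the factors explicit.
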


\begin{proof}
Let $w\in A_{p}^{+}$. By Lemma 2.1, $w=w_{1}w_{2}^{1-p}$ with $w_{1}\in A_{1}^{+},w_{2}\in A_{1}^{-}$. Next, we claim
$w_{1}\in RH_{r}^{+}$ for all $1<r<\frac{C}{C-1}$ with $C=\max \{A_{1}^{+}(w_{1}), A_{1}^{-}(w_{1})\}>1$. In fact, for fixed interval
$I=(a,b)$. We consider the truncation of $w$ at height $H$ defined by $w_{H}=\min\{w_{1},H\}$, which also satisfies $A_{1}^{+}$ with a constant $C_{H}\leq C$. We can therefore obtain that if $\lambda_{I}=M(w_{H}\chi_{I})(b)$ and $S_{\lambda}=\{x\in I:w_{H}(x)>\lambda\}$ then the following statement holds:
$$
\int_{S_{\lambda}}w_{H}(x)dx\leq C_{H}\lambda |S_{\lambda}|,\,\,\lambda\geq \lambda_{I}. \eqno(2.2)
$$
Indeed, it is straightforward if $S_{\lambda}=I$, since
$$w_{H}(S_{\lambda})=\int_{a}^{b}w_{H}(x)dx\leq \lambda_{I}(b-a)\leq C_{H}\lambda |S_{\lambda}|.$$
So we only to consider $S_{\lambda}\neq I$, let us fix $\varepsilon>0$ and an open set $O$ such that $S_{\lambda}\subset O\subset I$ and $|O|\leq \varepsilon+|S_{\lambda}|$. Let $O_{i}=(c,d)$ which is connected. There are two cases:
$a\leq c<d<b$ and $a\leq c<d=b$. In the first case $d$ is not contained in $S_{\lambda}$, and recall the definition of $S_{\lambda}$, $w_{1}^{+}$, we have $\int_{c}^{d}w_{H}(x)dx\leq C_{H}\lambda(d-c)$ while the second case handled as the case $S_{\lambda}= I$, since $\int_{c}^{d}w_{H}(x)dx\leq C(b-c)$. Thus $w_{H}(O_{i})\leq C_{H}\lambda|O_{i}|$. Adding up with $i$ we get $w_{H}(S_{\lambda}\leq w_{H}(O)\leq C_{H}\lambda|O_{i}|\leq C_{H}\lambda(\varepsilon+|S_{\lambda}|)$ and we obtain (2.2). We fix $\theta>-1$, multiply both sides of (2.2) by $\lambda^{\theta}$ and integrate from $\lambda_{I}$ to infinity we have
$$
\frac{1}{\theta+1}\int_{I}\left(w_{H}^{\theta+2}-\lambda_{I}^{\theta+1}\right)(x)dx\leq \frac{C_{H}}{\theta+2}\int_{I}w_{H}^{\theta+2}(x)dx.
$$
Now if $r=\theta+2<\frac{C_{H}}{C_{H}-1}$, then $\frac{1}{\theta+1}-\frac{C_{H}}{\theta+2}>0$, which implies
$$
\int_{I}w_{H}^{r}(x)dx\leq C_{H}\lambda_{I}^{r-1}\int_{I}w_{H}(x)dx=C_{H}\left(M(w_{H}\chi_{I})(b)\right)^{r-1}\int_{I}w_{H}(x)dx.
$$

Since $C_{H}\leq C$ implies $\frac{C_{H}}{C_{H}-1}\geq \frac{C}{C-1}$, therefore if $r\leq \frac{C}{C-1}$, then
$$\begin{array}{rl}
\displaystyle \int_{I}w_{H}^{r}(x)dx&=\displaystyle C_{H}\left(M(w_{H}\chi_{I})(b)\right)^{r-1}\int_{a}^{b}w_{H}(x)dx\\&=\displaystyle C\left(M(w_{1}\chi_{(a,b)})(b)\right)^{r-1}\int_{a}^{b}w_{H}(x)dx
\end{array}$$
So $w_{1}\in RH_{r}^{+}$ follows from the the monotone convergence theorem.
Since $w_{2}\in A_{1}^{-}$, we claim
$w_{2}^{1-p}\in RH_{\infty}^{+}$. In fact, for any interval $I=(a,b)$, by H\"{o}lder's inequality we have
$$
\left(\frac{1}{|I|}\int_{I}w_{2}(x)dx\right)^{1-p}\leq \frac{1}{|I|}\int_{I}w_{2}(x)^{1-p}dx,
$$
 and recall the $A_{1}^{-}$ condition, for almost every $x\in I^{-}=(2a-b,a)$, we have that
 $$Cw_{2}\geq \frac{1}{|I|}\int_{I}w_{2}(x)dx,$$ thus
$$\begin{array}{rl}
\displaystyle w_{2}(x)^{1-p}&\leq\displaystyle C\left(\frac{1}{|I|}\int_{I}w_{2}(x)dx\right)^{1-p}\\
&\leq\displaystyle C\frac{1}{|I|}\int_{I}w_{2}(x)^{1-p}dx\\
&\leq\displaystyle C\frac{1}{b-x}\int_{x}^{b}w_{2}(x)^{1-p}dx,
\end{array}$$
which implies our claim. Hence
\begin{eqnarray*}
\frac{1}{|I|}\int_{I}w^{r}
&\leq& \frac{1}{|I|}\int_{I}w_{1}^{r}\sup_{I}\left(w_{2}^{-r(p-1)}\right)\\
&\leq& C\left(\frac{1}{I_{1}}\int_{I_{1}}w_{1}\right)^{r}\left(\frac{1}{I_{1}}\int_{I_{1}}w_{2}^{1-p}\right)^{r}\\
&\leq& C\left(\inf_{I_{1}}w_{1}\right)^{r}\left(\sup_{I_{1}}w_{2}^{1-p}\right)^{r}\\
&\leq& C\left(\inf w_{1}\right)^{r}\left(\frac{1}{I_{2}}\int_{I_{2}}w_{2}^{1-p}\right)^{r}\\
&\leq& C\left(\frac{1}{I_{2}}\int_{I_{2}}w\right)^{r},
\end{eqnarray*}
where $I_{1}=(b,2b-a)$, $I_{2}=(2b-a,3b-2a)$. By Lemma 2.5, we conclude $w\in RH_{r}^{+}$. By Lemma 2.1, we have $w^{1-p'}\in RH_{r}^{-}$ for all $1<r<\frac{C}{C-1}$.

Let us fix $a<d$ and choose $b,c$ such that $b-a=d-c=\frac{1}{4}(d-a)$ (e.g we choose $b=\frac{d+3a}{4},c=\frac{3d+a}{4}$). Following from the five points $a, b,\frac{b+c}{2}, c, d$, we have four intervals, namely
$$
I_{1}=(a,b), I_{2}=\left(b,\frac{b+c}{2}\right), I_{3}=\left(\frac{b+c}{2},c\right),  I_{4}=(c,d).
$$
By Lemma 2.5, we have
$$
\frac{1}{|I_{1}|}\int_{I}w^{r}\left(\frac{1}{|I_{4}|}\int_{I_{4}}w^{r(1-p')}\right)^{p-1}\leq \left(\frac{1}{|I_{2}|}\int_{I_{2}}w\right)^{r}\left(\frac{1}{|I_{3}|}\int_{I_{3}}w^{(1-p')}\right)^{r(p-1)}\leq C^{r},
$$
thus $w^{r}\in A_{p}^{+}$ which follows from Lemma 2.6. If we choose $0<\varepsilon=r-1<\frac{1}{C-1}$, then we complete the proof of Lemma 2.7.
\end{proof}

To prove Theorem 1.1, we still need a celebrated interpolation theorem of operators with change of measures:
\begin{lemma}$[13]$\,\,
Suppose that $u_{0},v_{0},u_{1},v_{1}$ are positive weight functions and $1<p_{0},p_{1}<\infty$. Assume sublinear operator $S$ satisfies:
$$\|Sf\|_{L^{p_{0}}(u_{0})}\leq C_{0}\|f\|_{L^{p_{0}}(v_{0})},$$
and
$$\|Sf\|_{L^{p_{1}}(u_{1})}\leq C_{1}\|f\|_{L^{p_{1}}(v_{1})}.$$
Then
$$\|Sf\|_{L^{p}(u)}\leq C\|f\|_{L^{p}(v)}$$
holds for any $0<\theta<1$ and $\frac{1}{p}=\frac{\theta}{p_{0}}+\frac{1-\theta}{p_{1}}$, where
$u=u_{0}^{\frac{p\theta}{p_{0}}}u_{1}^{\frac{p(1-\theta)}{p_{1}}}$, $v=v_{0}^{\frac{p\theta}{p_{0}}}v_{1}^{\frac{p(1-\theta)}{p_{1}}}$ and $C\leq C_{0}^{\theta}C_{1}^{1-\theta}$.
\end{lemma}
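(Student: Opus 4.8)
The plan is to prove the statement by the Stein--Weiss method of complex interpolation: to build an analytic family of test functions and apply the Hadamard three-lines theorem. This is the right tool precisely because the two hypotheses are \emph{strong}-type estimates between (generally) different exponents $p_{0},p_{1}$, a genuinely Riesz--Thorin-type situation. One should note first that a naive use of H\"older's inequality does not work: writing $|Sf|^{p}u=(|Sf|^{p_{0}}u_{0})^{p\theta/p_{0}}(|Sf|^{p_{1}}u_{1})^{p(1-\theta)/p_{1}}$ and applying H\"older only yields $\|Sf\|_{L^{p}(u)}^{p}\le C_{0}^{p\theta}C_{1}^{p(1-\theta)}\|f\|_{L^{p_{0}}(v_{0})}^{p\theta}\|f\|_{L^{p_{1}}(v_{1})}^{p(1-\theta)}$, and the product on the right \emph{dominates} $\|f\|_{L^{p}(v)}^{p}$ rather than being dominated by it, so no useful bound results. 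Instead I would first reduce to linear $S$ and pass to the dual formulation: fixing a simple $f$ with $\|f\|_{L^{p}(v)}=1$ and a simple $h$ with $\|h\|_{L^{p'}(u^{1-p'})}=1$, it suffices to bound $\int(Sf)\overline{h}\,dx$ by $C_{0}^{\theta}C_{1}^{1-\theta}$, since $\|Sf\|_{L^{p}(u)}=\sup_{h}|\int(Sf)\overline{h}\,dx|$ over all such $h$.

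Next I would introduce the exponent function $1/p(z)=z/p_{0}+(1-z)/p_{1}$, so that $p(\theta)=p$, together with its conjugate $1/p'(z)=z/p_{0}'+(1-z)/p_{1}'$, and define the analytic families
$$f_{z}=\mathrm{sgn}(f)\,|f|^{p/p(z)}\,v^{1/p(z)}\,v_{0}^{-z/p_{0}}v_{1}^{-(1-z)/p_{1}},$$
$$h_{z}=\mathrm{sgn}(h)\,|h|^{p'/p'(z)}\,(u^{1-p'})^{1/p'(z)}\,(u_{0}^{1-p_{0}'})^{-z/p_{0}'}(u_{1}^{1-p_{1}'})^{-(1-z)/p_{1}'}.$$
A direct computation, using $v^{1/p}=v_{0}^{\theta/p_{0}}v_{1}^{(1-\theta)/p_{1}}$ and the analogous identity for $u$, shows $f_{\theta}=f$ and $h_{\theta}=h$. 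Moreover, on the line $\mathrm{Re}\,z=1$ one has $|f_{z}|^{p_{0}}v_{0}=|f|^{p}v$ and $|h_{z}|^{p_{0}'}u_{0}^{1-p_{0}'}=|h|^{p'}u^{1-p'}$ pointwise, with the parallel identities on $\mathrm{Re}\,z=0$ involving $p_{1},v_{1},u_{1}$. Consequently $\|f_{1+it}\|_{L^{p_{0}}(v_{0})}=\|h_{1+it}\|_{L^{p_{0}'}(u_{0}^{1-p_{0}'})}=1$, and likewise $\|f_{it}\|_{L^{p_{1}}(v_{1})}=\|h_{it}\|_{L^{p_{1}'}(u_{1}^{1-p_{1}'})}=1$.

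Then I would set $\Phi(z)=\int S(f_{z})\,\overline{h_{z}}\,dx$, so that $\Phi(\theta)=\int(Sf)\overline{h}\,dx$ is the target quantity. On $\mathrm{Re}\,z=1$, H\"older's inequality in the weighted pairing together with the first endpoint hypothesis gives
$$|\Phi(1+it)|\le\|S f_{1+it}\|_{L^{p_{0}}(u_{0})}\,\|h_{1+it}\|_{L^{p_{0}'}(u_{0}^{1-p_{0}'})}\le C_{0},$$
and on $\mathrm{Re}\,z=0$ the second hypothesis gives $|\Phi(it)|\le C_{1}$. Since $f$ and $h$ are simple and $S$ maps into the endpoint spaces, $\Phi$ is bounded and analytic on the closed strip $0\le\mathrm{Re}\,z\le 1$, so the Hadamard three-lines theorem yields $|\Phi(\theta)|\le C_{1}^{1-\theta}C_{0}^{\theta}$. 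Taking the supremum over $h$ gives $\|Sf\|_{L^{p}(u)}\le C_{0}^{\theta}C_{1}^{1-\theta}$ for normalized $f$, and the density of simple functions completes the argument with $C\le C_{0}^{\theta}C_{1}^{1-\theta}$.

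The main obstacle is the sublinearity of $S$: the analyticity of $z\mapsto\Phi(z)$ rests on $z\mapsto S(f_{z})$ being analytic, which is automatic for linear $S$ but not for merely sublinear $S$. For the operators of interest in this paper, namely the linear oscillatory integrals $T^{+}$ and $T^{-}$, this difficulty does not arise; in the general sublinear case I would remove it by writing $S$ as a supremum of linear operators, establishing the two endpoint bounds uniformly over a measurable linearizing selection, and passing to the supremum at the end. A secondary, purely technical point is the justification of the boundedness and analyticity of $\Phi$ on the strip (finiteness of the intermediate weighted integrals), which I would handle by first taking $f,h$ bounded with compact support and the weights suitably truncated, and then removing the truncation by monotone and dominated convergence.
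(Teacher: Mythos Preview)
The paper does not supply its own proof of this lemma; it is simply quoted from Stein and Weiss~[13] and used as a black box in the proof of Theorem~1.1. Your proposal is essentially the original Stein--Weiss complex-interpolation argument (analytic families $f_{z},h_{z}$, the three-lines theorem, and duality), and it is correct, including your observation that the sublinear case must be reduced to the linear one via linearization --- an issue the paper sidesteps since the operators $T^{\pm}$ it cares about are linear.
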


Lemma 2.7 and Lemma 2.8 are the mains tools in proving of Theorem 1.1.

\section{Proof of Theorem 1.1}
\begin{proof} (1)\,\, Suppose $P(x,y)$ is a real polynomial with degree $k$ in $x$ and degree $l$ in $y$. We shall carry out the argument by induction. First, we assume the conclusion of Theorem 1.1 is valid for all polynomials which are the sums of monomials of degree less than $k$ in $x$ times
monomials of any degree in $y$, together with monomials which are of degree $k$ in $x$  times monomials which are of degree less than $l$ in $y$ .Thus $P(x,y)$ can be written as
$$
P(x,y)=a_{kl}x^{k}y^{l}+R(x,y).
$$
where
$$
R(x,y)=\sum_{\alpha<k,\beta\leq l}a_{\alpha\beta}x^{\alpha}y^{\beta}+\sum_{\beta< l}a_{k\beta}x^{k}y^{\beta}.
$$
satisfying the above induction assumption.

For $kl=0$, the conclusion of Theorem 1.1 holds by the aid of weighted theory of one-sided Calderon-Zygumund operators. Let us now prove that the conclusion of Theorem 1.1 holds for arbitrary $k$ and $l$ by induction. Without loss of generality, we may assume $k>0,l>0$ and $|a_{kl}|\neq 0$ (for if $|a_{kl}|= 0$, Theorem 1.1 holds by the induction assumption).

{\it Case} 1. $|a_{kl}|=1.$

Write
$$\begin{array}{rl}
\displaystyle T^{+}f(x)&=\displaystyle \int_{x}^{1+x}e^{iP(x,y)}K(x-y)f(y)dy+\sum_{j=1}^{\infty}\int_{2^{j-1}+x}^{2^{j}+x}e^{iP(x,y)}K(x-y)f(y)dy\\
&=:\displaystyle T_{0}^{+}f(x)+\sum_{j=1}^{\infty}T_{j}^{+}f(x).
\end{array}$$
Take any $h\in \mathbb{R^{+}}$, and write
$$
P(x,y)=a_{kl}(x-h)^{k}(y-h)^{l}+R(x,y,h),
$$
where the polynomial $R(x,y,h)$ satisfies the induction assumption, and the coefficients of $R(x,y,h)$ depend on $h$.

$1^{\circ}$ {\it Estimates for} $T_{0}^{+}$.

We have
$$\begin{array}{rl}
\displaystyle T_{0}^{+}f(x)&=\displaystyle \int_{x}^{1+x}e^{i(R(x,y,h)+a_{kl}(y-h)^{k+l})}K(x-y)f(y)dy\\
&\,\,+\displaystyle \int_{x}^{1+x}\left\{e^{iP(x,y)}-e^{i(R(x,y,h)+a_{kl}(y-h)^{k+l})}\right\}K(x-y)f(y)dy\\&=:\displaystyle T_{01}^{+}f(x)+T_{02}^{+}f(x).
\end{array}$$
Now we split $f$ into three parts as follows
$$\begin{array}{rl}
\displaystyle f(y)&=\displaystyle f(y)\chi_{\{|y-h|<\frac{1}{2}\}}(y)+f(y)\chi_{\{\frac{1}{2}\leq|y-h|<\frac{5}{4}\}}(y)+f(y)\chi_{\{|y-h|\geq\frac{5}{4}\}}(y)\\
&=:f_{1}(y)+f_{2}(y)+f_{3}(y).
\end{array}$$

It is easy to see that when $|x-h|<\frac{1}{4}$, we have
$$
T_{01}^{+}f_{1}(x)=\int e^{i(R(x,y,h)+a_{kl}(y-h)^{k+l})}K(x-y)f_{1}(y)dy.
$$
Thus, it follows from the induction assumption that
$$
\int_{|x-h|<\frac{1}{4}}|T_{01}^{+}f_{1}(x)|^{p}w(x)dx\leq C\int_{|y-h|<\frac{1}{2}} |f(y)|^{p}w(y)dy,\eqno(3.1)
$$
where $C$ is independent of $h$ and the coefficients of $P(x,y)$.\\

Notice that if $|x-h|<\frac{1}{4},\frac{1}{2}\leq|y-h|<\frac{5}{4}$, then $y-x>\frac{1}{4}$. Thus
$$
|T_{01}^{+}f_{2}(x)|\leq C\int_{x+\frac{1}{4}}^{x+1}|K(x-y)f_{2}(y)|dy\leq CM^{+}(f_{2})(x).
$$
So we have
$$
\int_{|x-h|<\frac{1}{4}}|T_{01}^{+}f_{2}(x)|^{p}w(x)dx\leq C\int_{|y-h|<\frac{5}{4}} |f(y)|^{p}w(y)dy,\eqno(3.2)
$$
where $C$ is independent of $h$ and the coefficients of $P(x,y)$.

Again notice that if $|x-h|<\frac{1}{4},|y-h|\geq\frac{5}{4}$, then $y-x>1$,thus
$$
T_{01}^{+}f_{3}(x)=0.\eqno(3.3)
$$

Combining (3.1), (3.2) and (3.3), we get
$$
\int_{|x-h|<\frac{1}{4}}|T_{01}^{+}f(x)|^{p}w(x)dx\leq C\int_{|y-h|<\frac{5}{4}} |f(y)|^{p}w(y)dy,\eqno(3.4)
$$
where $C$ is independent of $h$ and the coefficients of $P(x,y)$.\\

Evidently, if $|x-h|<\frac{1}{4},0<y-x<1$, then
$$
|e^{iP(x,y)}-e^{i(R(x,y,h)+a_{kl}(y-h)^{k+l})}|\leq |a_{kl}||x-y|=C(y-x).
$$
Therefore, when $|x-h|<\frac{1}{4}$, we have
$$
|T_{02}^{+}f(x)|\leq C\int_{x}^{x+1}|f(y)|dx\leq CM^{+}(f(\cdot)\chi_{B(h,\frac{5}{4})}(\cdot))(x).
$$
It follows that
$$
\int_{|x-h|<\frac{1}{4}}|T_{02}^{+}f(x)|^{p}w(x)dx\leq C\int_{|y-h|<\frac{5}{4}} |f(y)|^{p}w(y)dy,\eqno(3.5)
$$
where $C$ is independent of $h$ and the coefficients of $P(x,y)$.

From (3.4) and (3.5), it follows that the inequality
$$
\int_{|x-h|<\frac{1}{4}}|T_{0}^{+}f(x)|^{p}w(x)dx\leq C\int_{|y-h|<\frac{5}{4}} |f(y)|^{p}w(y)dy,
$$
holds uniformly in $h\in \mathbb{R}^{+}$, which implies
$$
\|T_{0}^{+}f\|_{L^{p}(w)}\leq C\|f\|_{L^{p}(w)},\eqno(3.6)
$$
where $C$ is independent of the coefficients of $P(x,y)$, and $w\in A_{p}^{+}$.

$2^{\circ}$  {\it Estimates for} $T_{j}^{+}f$.

For $j\geq 1$, we have
$$
|T_{j}^{+}f(x)|\leq \int_{2^{j-1}+x}^{2^{j}+x}\frac{|f(y)|}{|x-y|}dy\leq CM^{+}(f)(x),
$$
where $C$ is independent of $j$. By lemma 2.7, we know that there exists $\varepsilon>0$, such that $w^{1+\varepsilon}\in A_{p}^{+}$. Thus we have
$$
\|T_{j}^{+}f\|_{L^{p}(w^{1+\varepsilon})}\leq C\|f\|_{L^{p}(w^{1+\varepsilon})},\eqno(3.7)
$$
where $C$ is independent of $j$. On the other hand, by means of the methods in [5] and [10], we get
$$
\|T_{j}^{+}f\|_{L^{p}}\leq C2^{-j\delta}\|f\|_{L^{p}},\eqno(3.8)
$$
where $C$ is dependents only on the total degree of $P(x,y)$, and $\delta>0$. From (3.7) , (3.8) and Lemma 2.8, it follows that
$$\|T_{j}^{+}f\|_{L^{p}(w)}\leq C2^{-j\theta\delta}\|f\|_{L^{p}(w)},\eqno(3.9)$$
where $0<\theta<1$, $\theta$ is independent of $j$, and $C$ depends only on the total degree of $P(x,y)$.

Now (3.6) and (3.9) imply
$$
\|T^{+}f\|_{L^{p}(w)}\leq C\|f\|_{L^{p}(w)},
$$
where $C$ depends only on the total degree of $P(x,y)$, and $w\in A_{p}^{+}$.

{\it Case} 2. $|a_{kl}|\neq 1$.

Write $\lambda=|a_{kl}|^{\frac{1}{k+l}}$, and
$$
P(x,y)=\lambda^{-(k+l)}a_{kl}(\lambda x)^{k}(\lambda y)^{l}+R(\frac{\lambda x}{\lambda},\frac{\lambda y}{\lambda})=Q(\lambda x,\lambda y).
$$
Thus we have
$$\begin{array}{rl}
\displaystyle T^{+}f(x)&=\displaystyle \mathrm{p.v.}\int e^{iQ(\lambda x, \lambda y)}K(x,y)f(y)dy\\
&=\displaystyle \mathrm{p.v.}\int e^{iQ(\lambda x,y)}K(\frac{\lambda x}{\lambda},\frac{y}{\lambda})f(\frac{y}{\lambda})\lambda^{-1}dy\\&=\displaystyle \lambda^{-1}T^{+}_{\lambda}(f(\frac{\cdot}{\lambda}))(\lambda x),
\end{array}$$
where $K_{\lambda}(x,y)=K(\frac{x}{\lambda},\frac{y}{\lambda})$ and
$$
T^{+}_{\lambda}f(x)=\mathrm{p.v.}\int e^{iQ(x,y)}K_{\lambda}(x,y)f(y)dy.
$$
It is esay to see that $K_{\lambda}$ satisfies (1.1), (1.2), and the operator $f\mapsto p.v.\int K_{\lambda}(x,y)f(y)dy$ is of type $(L^{2},L^{2})$. Therefore, from the conclusion in {\it Case} 1, we obtain
$$
\|T^{+}_{\lambda}f\|_{L^{p}(w)}\leq C\|f\|_{L^{p}(w)}.
$$
where $w\in A_{p}^{+}$ and $C$ depends only on the total degree of $P(x,y)$. Noticing Lemma 2.2, we have
\begin{eqnarray*}
\int |T^{+}f(x)|^{p}w(x)dx
&=&\lambda^{-p}\int \left|T_{\lambda}^{+}f(\frac{\cdot}{\lambda})(\lambda x)\right|^{p}w(x)dx\\
&=&\lambda^{-p-1}\int \left|T_{\lambda}^{+}f(\frac{\cdot}{\lambda})(x)\right|^{p}w(\frac{x}{\lambda})dx\\
&\leq&C\int \left|f(\frac{x}{\lambda})\right|^{p}w(\frac{x}{\lambda})dx\\
&=&C\int |f(x)|^{p}w(x)dx,
\end{eqnarray*}
that is $\|T^{+}f\|_{L^{p}(w)}\leq C\|f\|_{L^{p}(w)}$, where $C$ depends only on the total degree of $P(x,y)$, but not on the coefficients of $P(x,y)$, and $w\in A_{p}^{+}$.

(2)\,\, We omit the details, since they are very similar to those of the proof of (1).
\end{proof}

%%%%%%%%%%%% References %%%%%%%%%%%%%
%%
%<Author name> is written as Initial of Given Name, and Family Name.
%<Title> is written in roman letters.
%<Journal name> should be abbreviated according to
% the MR Serials Abbreviations List of Mathematical Reviews:
% (Abbreviations of Names of Serials; http://www.ams.org/mr-database)
%For <Pages>, use en-dash "--" between page numbers.
%%

\end{document}